\newcommand{\IGNORE}[1]{}
\newcommand{\ds}{\displaystyle}
\newcommand{\myfloor}[1]{\lfloor {#1} \rfloor}
\newcommand\tn[1]{\textnormal{#1}}
\newcommand{\symmdiff}{\bigtriangleup}
\begin{document}


{
{
\title{ On Eulerian orientations of even-degree hypercubes}

\IGNORE{
\author[uw]{Maxwell Levit
}

\author[iisc]{L.~Sunil Chandran
}

\author[uw]{Joseph Cheriyan\corref{cor1}\fnref{fn1}
}
\ead{jcheriyan@uwaterloo.ca}
\ead[url]{http://www.math.uwaterloo.ca/\~{}jcheriyan}

\fntext[fn1]{This author acknowledges support from the Natural Sciences
\& Engineering Research Council of Canada (NSERC), No.~RGPIN--2014--04351.}

\address[uw]{
C\&O Dept., University of Waterloo, 
Waterloo, ON, Canada N2L~3G1
}

\address[iisc]{
Computer Science and Automation Dept., 
Indian Institute of Science, Bangalore, India
}
end \IGNORE
}

\author{
Maxwell Levit
\thanks {
C\&O Dept., University of Waterloo, 
Waterloo, Canada.
}
\and
L.~Sunil Chandran
\thanks {
Computer Science and Automation Dept., 
Indian Institute of Science, Bangalore, India.
}
\and
Joseph Cheriyan
\thanks {
C\&O Dept., University of Waterloo, 
Waterloo, Canada.
}
\thanks{This author acknowledges support from the Natural Sciences
\& Engineering Research Council of Canada (NSERC), No.~RGPIN--2014--04351.}
}

}

  \date{}
\maketitle

\begin{abstract}
{
It is well known that \textit{every} Eulerian orientation
of an Eulerian $2k$-edge connected (undirected) graph
is strongly $k$-edge connected.
A long-standing goal in the area is to
obtain analogous results for
other types of connectivity,
such as node connectivity and element connectivity.
We show that \textit{every} Eulerian orientation
of the hypercube of degree $2k$
is strongly $k$-node connected.
}
\end{abstract}

\IGNORE{
\begin{keyword}
Graph connectivity \sep Graph orientations \sep Hypercubes
\end{keyword}
}

\medskip

{\bf Keywords}: Graph connectivity, Graph orientations, Hypercubes
\medskip

{\bf AMS subject classifications}: 05C40, 68R10

}


\newtheorem{theorem}{Theorem}
\newtheorem{corollary}[theorem]{Corollary}
\newtheorem{fact}[theorem]{Fact}
\newtheorem{claim}[theorem]{Claim}

\newenvironment{proof}{{\noindent \bf Proof:}}{\hfill\rule{2mm}{2mm}}

\newenvironment{remark}{{\noindent \bf Remark}:}{}



\section{Introduction} \label{sec:intro}

The hypercube $Q_k$ is a $k$-regular graph on $2^k$ nodes
that can be constructed by
labeling the nodes by the $2^k$ subsets of the set $\{1,2,\dots,k\}$
and placing an edge between two nodes whenever
the two node labels (i.e., the two subsets) differ in
a single element.
%
%
{
Hypercubes are ubiquitous, both in mathematics and in the ``real world.''
It can be seen that $Q_k$ is the ``covering graph'' of the family of
all subsets of a $k$-element set,
%
see \cite{Bol},
and hence, properties of hypercubes have universal appeal.
(Recall that the covering graph of
the poset/powerset of a set $U$ has a node for each subset $A$ of
$U$, and the nodes of subsets $A,B$ are adjacent iff $|A \symmdiff B|=1$.)
}
Hypercubes (and their variants) are useful in
computer communication networks, VLSI design, etc., and
there is extensive literature in this area, see \cite{CD90,FKL02,H95,L92,SS88}.

An \textit{orientation} of an (undirected) graph $G=(V,E)$ is a directed~graph
$D=(V,A)$ such that each edge $\{v,w\}\in E$ is replaced
by exactly one of the arcs $(v,w)$ or $(w,v)$.

Orientations of hypercubes have applications in practical domains
such as broadcasting in computer communication networks
and the design of parallel computer architectures.
The connectivity properties of hypercubes and orientations of hypercubes
have been studied, see \cite{CD90,H95,SS88}, and
orientations of hypercubes that achieve the maximum possible node
connectivity are of interest, see \cite[Proposition~9]{H95}.

Our key result states that the optimal node connectivity
among orientations of $Q_{2k}$
can be achieved in a trivial way:
pick any orientation such that
the indegree is equal to the outdegree at every node.


\subsection{Smooth orientations and Eulerian orientations} \label{sec:smooth}

For a node $v$ of a directed graph, we use $d^{in}(v)$
to denote the number of arcs with head~$v$;
similarly, $d^{out}(v)$ denotes the number of arcs with tail~$v$.

An orientation of an (undirected) graph $G$ is called \textit{smooth}
if the absolute value of the difference between the
indegree and the outdegree of every node is at most one, that is,
$\big| d^{in}(v)-d^{out}(v) \big| \le1,\;\forall{v}\in{V(G)}$.
A smooth orientation of an Eulerian graph $G$ is
called an \textit{Eulerian} orientation;
such an orientation satisfies
$d^{in}(v) = d^{out}(v),\;\forall{v}\in{V(G)}$.
Moreover, it can be seen that
for every {Eulerian} orientation,
for every subset of the nodes $W$,
the number of arcs leaving $W$ is equal to
the number of arcs entering $W$, see \cite[Ch.6.1]{Di}.
Therefore, every Eulerian orientation of a
$2k$-edge connected Eulerian graph results in a directed graph
that is $k$-edge connected.
An Eulerian orientation of an Eulerian graph
can be found by orienting the edges of
each connected component according to an Euler tour.


\subsection{Nash-Williams' results and possible extensions} \label{sec:NWtheorems}

A well-known result of Nash-Williams says that
the edges of a $k$-edge connected graph
can be oriented such that the resulting
directed~graph is $\myfloor{\frac{k}{2}}$-edge connected
\cite{N60}, \cite[Ch.9]{BM}.
A long-standing goal in the area is to
extend Nash-Williams' result  to obtain analogous results for
other types of connectivity,
such as node connectivity and element connectivity,
see \cite{F-HC95,KS06,KL08,T89,T2015}.


\subsection{Our results} \label{sec:results}

We show that {every} Eulerian orientation
of the hypercube $Q_{2k}$
is strongly $k$-node connected;
recall that a directed~graph is called \textit{strongly $k$-node connected} if
it has $\ge k+1$ nodes and
the deletion of any set of $\leq (k-1)$ nodes results in a
strongly-connected directed~graph.

Let us mention that there are easy inductive constructions that prove
that there exists a ``good orientation'' for a hypercube of even degree;
we describe one such construction in Fact~\ref{f:exists-goodorient}.
For hypercubes of odd degree, the smoothness condition does not
guarantee ``good orientations;''
for example, there exist smooth orientations of $Q_3$ that are not
strongly connected.


\section{Preliminaries} \label{sec:prelims}

This section has some definitions and preliminary results.
Also, see \cite{Di} for standard definitions and notation.

The hypercube $Q_k$ is the Cartesian product of $k$ copies of $K_2$,
see \cite{wiki-HG}.
There are other constructions of $Q_k$, and we describe three of them.
\begin{enumerate}[(i)]
\item
Label $2^k$ nodes by $k$-bit binary strings, and
place an edge between two nodes whenever
their labels differ in exactly one bit
(i.e., the Hamming distance between the two strings is one).

\item
Label $2^k$ nodes by the $2^k$ subsets of a set with $k$ elements,
and place an edge between two nodes whenever
the two node labels (i.e., the two subsets) differ in a single element.
\item
Take two disjoint hypercubes $Q_{k-1}$, and place an edge
between corresponding pairs of nodes in the 
two copies of $Q_{k-1}$; thus, the edges between the
two copies of $Q_{k-1}$ form a perfect matching.
\end{enumerate}

By a $d$-hypercube we mean a hypercube of degree~$d$.

For a node~set $S$ of a graph $G$, we use $N_G(S)$  to denote the
set of neighbors of $S$, thus,
$N_G(S) = \{ w\in V(G)-S \;:\; \exists v\in S ~\tn{such that}~ \{v,w\}\in E(G) \}$.

\begin{fact} \label{f:exists-goodorient}
For each integer $k\ge1$, there exists an Eulerian orientation of $Q_{2k}$
that is strongly $k$-node connected.
\end{fact}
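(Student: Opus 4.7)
The plan is to proceed by induction on $k$, exploiting the recursive product structure $Q_{2k} = Q_{2k-2} \,\square\, Q_2$. For the base case $k=1$, the graph $Q_2$ is a 4-cycle, so orienting it as a directed cycle yields a strongly connected (equivalently, strongly 1-node connected) Eulerian orientation.

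For the inductive step, suppose $\ori{D}_{k-1}$ is a strongly $(k-1)$-node connected Eulerian orientation of $Q_{2k-2}$. Identify $V(Q_{2k}) = V(Q_{2k-2}) \times V(Q_2)$ with $V(Q_2) = \{00, 01, 11, 10\}$. For each $v \in V(Q_{2k-2})$, the ``fiber'' $C_v = \{(v, x) : x \in V(Q_2)\}$ induces a 4-cycle in $Q_{2k}$. I would orient every $C_v$ as a directed 4-cycle in the same cyclic order, say $(v,00) \to (v,01) \to (v,11) \to (v,10) \to (v,00)$, and, for every edge $\{u, v\}$ oriented $u \to v$ in $\ori{D}_{k-1}$, orient all four parallel copies $(u, x) \to (v, x)$ accordingly. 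A one-line degree count at $(v, x)$ gives $d^{in}(v,x) = 1 + (k-1) = k = d^{out}(v,x)$, so the resulting orientation $\ori{D}_k$ is Eulerian.

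The hard part is verifying strong $k$-node connectivity. Given any $S \subseteq V(Q_{2k})$ with $|S| \le k-1$ and any $s, t \in V(Q_{2k}) \setminus S$, the goal is to construct a directed $s$-to-$t$ path in $\ori{D}_k - S$. I would classify each base vertex $v$ by the ``load'' $|S \cap C_v|$ and let $T' = \{v : |S \cap C_v| = 4\}$ denote the set of fully-blocked fibers. The budget $|S| \le k-1$ forces $|T'| \le \lfloor (k-1)/4 \rfloor \le k-2$ for $k \ge 2$, so the inductive hypothesis yields strong connectivity of the base digraph $\ori{D}_{k-1} - T'$. The routing then has three parts: (a) shift $s = (v_s, x_s)$ within its fiber using $C_{v_s} \setminus S$ (possibly with a short detour through a neighboring fiber) to reach a chosen ``height''; (b) traverse a directed $v_s$-to-$v_t$ path in $\ori{D}_{k-1} - T'$, lifting each base arc $u \to v$ to its parallel copy $(u, x) \to (v, x)$ at the current height $x$; (c) in each intermediate fiber, adjust the height so that the next lift avoids $S$, and finally shift into $t$ inside $C_{v_t}$.

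The principal obstacle is handling partially-blocked fibers $v$ with $|S \cap C_v| = 2$: when the two removed vertices lie diagonally opposite on $C_v$, the directed 4-cycle minus those two is not strongly connected, so shifting heights inside such a fiber requires stepping out to a neighboring fiber. Showing that one can always choose a neighboring fiber whose relevant vertex is outside $S$ will hinge on a counting argument using the global budget $|S| \le k-1$ together with the $(2k-2)$-regularity of $Q_{2k-2}$, and coordinating these detours globally so that the path remains consistent is where the bulk of the case analysis lies.
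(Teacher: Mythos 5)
Your construction of the orientation is exactly the paper's (same decomposition $Q_{2k}=Q_{2k-2}\,\square\,Q_2$, directed 4-cycles on the fibers, lifted base arcs, same degree count for Eulerian-ness), but your verification of strong $k$-node connectivity is not a proof: the step you yourself flag as ``where the bulk of the case analysis lies'' --- adjusting heights in partially blocked fibers via detours through neighboring fibers, and coordinating these detours globally --- is precisely the content that is missing, and no counting argument is supplied. Worse, the detour mechanism as described cannot work with this orientation: since every base edge $u\to v$ is lifted to \emph{all four} parallel arcs $(u,x)\to(v,x)$, all four edges between the fibers $C_v$ and $C_w$ point the same way, so once you step out of $C_v$ into an out-neighboring fiber you cannot step back into $C_v$; a return would require a full directed cycle through the base, which may itself pass through blocked fibers, and nothing in your budget argument controls this. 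So the fiber-by-fiber routing scheme has a genuine gap that is not merely technical.

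The fix is to slice the graph the other way, which is what the paper does: view $Q_{2k}$ as four \emph{layers}, i.e., four copies of $Q_{2k-2}$ (one per vertex of $Q_2$), each carrying the strongly $(k-1)$-node connected inductive orientation, joined by the $2^{2k-2}$ oriented fibers. Given $S$ with $|S|\le k-1$, either (i) $S$ is contained in a single layer, in which case the other three layers are untouched and strongly connected, and every surviving vertex of the bad layer has its entire fiber intact (the other three vertices of its fiber lie in the untouched layers), so it can enter and leave via its fiber; or (ii) every layer contains at most $k-2$ vertices of $S$, so each layer minus $S$ is strongly connected by induction, and any one fiber disjoint from $S$ (which exists since $|S|\le k-1<2^{2k-2}$) links all four layers into a single strongly connected digraph. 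This two-case argument replaces your entire routing scheme and closes the induction; your current write-up does not.
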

\begin{proof}
Let $k\ge1$ be an integer.
We sketch an inductive construction that gives a strongly $(k+1)$-node~connected
Eulerian orientation for the hypercube $Q_{2k+2}$.
Observe that any Eulerian orientation of $Q_2$ (the 4-cycle) is
strongly 1-connected.  Assume (by induction) that $Q_{2k}$ has a
strongly $k$-node~connected Eulerian orientation.
View the $(2k+2)$-hypercube as four $2k$-hypercubes (i.e., four copies of $Q_{2k}$)
together with $2^{2k}$ 4-cycles,
where each of these 4-cycles $C_i$
contains a distinct node $i$ of
the first copy of $Q_{2k}$
as well as the image of $i$ in each of the other three copies of $Q_{2k}$.
By the induction hypothesis,
there exists a strongly $k$-node~connected Eulerian orientation for $Q_{2k}$.
Fix such an orientation for each of the four copies of $Q_{2k}$.
Moreover, for each of the 4-cycles $C_i$,
fix any Eulerian orientation of $C_i$.
Let $D$ be the resulting directed~graph (i.e., orientation of $Q_{2k+2}$).
We claim that $D$ is strongly $(k+1)$-node~connected.
To see this, consider any set of nodes $Z$ of size $\le k$.
Suppose that one of the four copies of $Q_{2k}$ contains $Z$;
then it is clear that each of the other three copies of $Q_{2k}$
is strongly connected in $D-Z$, and hence,
(using the $2^{2k}$ oriented 4-cycles of $D$)
it can be seen that $D-Z$ is strongly connected.
Otherwise, each of the four copies of $Q_{2k}$ has $\le k-1$ nodes of $Z$,
hence, the removal of $Z$ from any one of the four copies of $Q_{2k}$
results in a strongly connected directed~graph;
again (using the $2^{2k}$ oriented 4-cycles of $D$),
it can be seen that $D-Z$ is strongly connected.
\end{proof}


\section{Eulerian orientations of $2k$-hypercubes} \label{sec:cube}

This section has our results and proofs.
In this section, we assume that $k$ is a positive integer.

{
\begin{theorem} \label{thm:oriG}
Let $G$ be a $2k$-regular $2k$-node~connected graph
such that for every set of nodes $S$ with $1\leq |S| \leq |V(G)|/2$
we have
$\ds |N_G(S)|  ~>~ \min\{ k^2-1,\; (k-1)(|S|+1) \}$.
Then every Eulerian orientation of $G$ is strongly $k$-node connected. 
\end{theorem}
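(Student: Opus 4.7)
The plan is to proceed by contradiction. Suppose some Eulerian orientation $D$ of $G$ and some set $Z\subseteq V(G)$ with $|Z|\le k-1$ are such that $D-Z$ is not strongly connected. Then $V(G)\setminus Z$ can be partitioned into non-empty sets $A,B$ such that no arc of $D$ goes from $A$ to $B$, so every $A$-$B$ edge of $G$ is oriented from $B$ to $A$ in $D$. After swapping $A$ and $B$ if necessary, $1\le|A|\le|V(G)|/2$, so the neighborhood hypothesis applies with $S=A$, and it suffices to derive $|N_G(A)|\le\min\{k^2-1,(k-1)(|A|+1)\}$, the opposite of the hypothesis.

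The key step is to extract two complementary upper bounds on the cross-edge count $d_G(A,B)$ from the Eulerian property. Since $D$ is Eulerian, the number of arcs leaving any vertex set equals the number entering it. Applied at $A$, every outgoing arc ends in $Z$ (none enter $B$) and every incoming arc originates in $Z\cup B$; this forces $d_G(A,B)\le d_G(A,Z)$. The symmetric inequality at $B$ gives $d_G(A,B)\le d_G(B,Z)$. Since $G$ is simple, the first inequality produces the $|A|$-sensitive bound $d_G(A,B)\le d_G(A,Z)\le|A|\cdot|Z|\le(k-1)|A|$. Averaging the two Eulerian inequalities and using $d_G(A,Z)+d_G(B,Z)\le 2k|Z|\le 2k(k-1)$ produces the $|A|$-free bound $d_G(A,B)\le k(k-1)$.

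Finally, every neighbor of $A$ lies either in $Z$ or is an endpoint in $B$ of some $A$-$B$ edge, so $|N_G(A)|\le|Z|+d_G(A,B)$, and combining with the two bounds above gives
\[
|N_G(A)| \;\le\; (k-1)+\min\{(k-1)|A|,\; k(k-1)\} \;=\; \min\{(k-1)(|A|+1),\; k^2-1\},
\]
contradicting the hypothesis on $A$ and completing the argument.

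The only real obstacle I anticipate is noticing that one needs two different upper bounds on $d_G(A,B)$ — one linear in $|A|$, coming from simplicity via $d_G(A,Z)\le|A|\cdot|Z|$, and one independent of $|A|$, coming from symmetrically combining the Eulerian cut identities at $A$ and at $B$ — matched respectively to the two branches of the $\min$ appearing in the hypothesis. Neither bound alone matches the threshold, but together they do, and the rest is a direct calculation.
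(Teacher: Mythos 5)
Your proof is correct and follows essentially the same route as the paper's: a contradiction via the Eulerian cut-balance identity, with the two branches of the $\min$ supplied respectively by simplicity ($d_G(A,Z)\le |A|\,|Z|$) and by the degree of the deleted set ($\le k|Z|$), then combined with $|N_G(A)|\le |Z|+d_G(A,B)$. The only cosmetic difference is that the paper gets the bound $d_G(A,B)\le k|Z|$ directly from the fact that each node of $Z$ has out-degree exactly $k$ in the Eulerian orientation, rather than by averaging the two cut identities at $A$ and at $B$.
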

\begin{proof}
Let $D$ denote an arbitrary Eulerian orientation of $G$.
(In what follows, when we refer to \textit{the orientation} of an edge
of $G$ we mean the corresponding directed edge of $D$.)
By way of contradiction, suppose that $D$ is not strongly $k$-node
connected.  Then there is a node~set $Z$ of size $\leq k-1$ whose
deletion from $D$ results in a directed~graph that has a partition
 $(S, \bar{S})$ of its node~set $V(G)-Z$ such that
 both $S,\bar{S}$ are nonempty and 
 the edges of $G-Z$ in this cut either
 are all oriented from $S$ to $\bar{S}$ or
 are all oriented from $\bar{S}$ to ${S}$.
We fix the notation such that $|S| \leq |\bar{S}|$.
(Now, observe that $|S|$ satisfies the condition stated in the hypothesis.)
Moreover, without loss of generality,
we assume that the edges are oriented from $S$ to $\bar{S}$
(the arguments are similar for the other case).
Observe that $G-Z$ has $\ge |N_G(S)| - |Z|$ edges in the cut $(S, \bar{S})$.
Thus, $D$ has $\ge |N_G(S)| - |Z|$ edges oriented out from $S$
(and into $\bar{S}$).
Consider the cut $(S, \bar{S} \cup{Z})$ of $G$, and
observe that it has $\leq \min\{ k|Z|,\; |S|\,|Z| \}$ edges oriented
into $S$ (and out of $Z$), because
(i)~all such edges are incident to nodes of $Z$ and only $k$
of the $2k$~edges incident to a node $w\in Z$ are oriented out of $w$;
(ii)~each such edge is incident to a node $s\in S$ and a node $w\in Z$
(and each pair $s,w$ contributes at most one such edge).
Thus, the cut $(S, \bar{S} \cup{Z})$ of $G$ has
$\ge |N_G(S)| - |Z| \ge |N_G(S)| - (k-1)$ edges oriented out of $S$ and
$\leq \min\{ k|Z|,\; |S|\,|Z| \} \leq \min\{ k(k-1),\; |S|(k-1) \}$
edges oriented into $S$;
the hypothesis (in the theorem) implies that the former quantity
is greater than the latter quantity.
This is a contradiction:
in an Eulerian orientation of an Eulerian graph,
every cut has the same number of outgoing edges and incoming edges.
\end{proof}
}

In the next subsection we show that hypercubes of even degree
satisfy all the conditions stated in Theorem~\ref{thm:oriG};
this gives our main result.

\subsection{Bounds for the ${2k}$-hypercube} \label{sec:iso}

The main goal of this subsection is to show that the hypercube $Q_{2k}$
satisfies the inequalities stated in Theorem~\ref{thm:oriG}.
Our analysis has two parts depending on the size $m$ of the set $S\subseteq V(Q_{2k})$
(in the statement of Theorem~\ref{thm:oriG});
the first part  (Fact~\ref{f:cube-2}) applies for $1\le m\le 2k+1$ and it follows easily;
the second part (Fact~\ref{f:big-m}) applies for $2k+2\le m\le 2^{2k-1}$ and it follows by
exploiting properties of the hypercube.
In more detail, in the second part, we show that the
minimum of $|N_{Q_{2k}}(S)|$ over all sets $S\subseteq V(Q_{2k})$ of size $m$
(where $2k+2\le m\le 2^{2k-1}$) is $>k^2-1$;
our proof avoids elaborate computations by exploiting structural
properties of hypercubes;
a key point is to focus on a subgraph of the hypercube induced by
the set of binary strings of Hamming weight~$i$ and 
the set of binary strings of Hamming weight~$i-1$
(see Claim~\ref{cl:lower-shadow} in the proof of Fact~\ref{f:big-m}).

We follow the notation of \cite{Bol} and use $b_v(m,Q_{2k})$ to denote
$\min \{ |N_{Q_{2k}}(S)| \;:\; S\subseteq V(Q_{2k}),\; |S|=m \}$; 
thus, $b_v(m,Q_{2k})$ denotes the minimum
over all node~sets $S\subseteq V(Q_{2k})$ of size~$m$
of the number of neighbors of $S$.
For the sake of exposition,
we mention that 
the node~sets $S$ with $|N_{Q_{2k}}(S)| = b_v(m,Q_{2k})$
(i.e., the minimizers of $b_v(m,Q_{2k})$)
are Hamming balls (see \cite[page~126]{Bol}), and
the formula for $b_v(m,Q_{2k})$ (stated in Theorem~\ref{thm:harper} below)
is obtained by computing the minimum number of neighbors of such sets.
Harper, see \cite{H66} and also see \cite{FF81}, proved the following result:

\begin{theorem}[Theorem~4, Ch.~16, \cite{Bol}] \label{thm:harper}
Every integer $m$, $1\le m \le 2^{2k}-1$, has a unique representation
in the form
\begin{align*}
m &~=~ \sum_{i=r+1}^{2k} {2k \choose i} + m', \quad 0<m' \leq {2k \choose r}, \\
m' &~=~ \sum_{j=s}^r {m_j \choose j}, \quad 1\le s\le m_s<m_{s+1}<\dots<m_r. \\
\tn{Moreover,~} & \\
 b_v(m,Q_{2k}) &~=~ {2k \choose r} - m' + \sum_{j=s}^r {m_j \choose {j-1}}.
\end{align*}
\end{theorem}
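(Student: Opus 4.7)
The plan is to prove Harper's theorem in two stages: first reduce to evaluating $b_v(m, Q_{2k})$ on a canonical ``simplicial'' extremal set, and then compute its vertex boundary in closed form.

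First I would establish that for every $S \subseteq V(Q_{2k})$ of size $m$ there exists a simplicial initial segment $T_m$ of size $m$ with $|N(T_m)| \le |N(S)|$. Identifying $V(Q_{2k})$ with $\{0,1\}^{2k}$ via construction~(i), $T_m$ consists of all strings of Hamming weight strictly greater than $r$ together with the first $m'$ strings of weight exactly $r$ in colex order. The reduction uses coordinate compressions: for each $i\in[2k]$, let $C_i(S)$ be obtained by replacing, for each edge $\{x,x\oplus e_i\}$ with exactly one endpoint in $S$, the endpoint in $S$ by the one whose $i$-th coordinate is $0$. A routine injection from the outgoing edges of $C_i(S)$ to those of $S$ (case analysis on the $i$-th coordinate of each neighbor) shows $|N(C_i(S))|\le|N(S)|$. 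Iterating over $i$ terminates at a down-compressed set, which is necessarily supported on a union of complete weight levels together with a partial top level; a further adjacent-transposition argument within that partial level reduces to the colex initial segment.

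Second, I would compute $|N(T_m)|$ directly. The neighbors of $T_m$ outside $T_m$ split by Hamming weight into two parts: (a) weight-$r$ strings not in $T_m$, each of which is adjacent to some weight-$(r+1)$ string and hence to $T_m$, contributing exactly $\binom{2k}{r}-m'$; and (b) weight-$(r-1)$ strings adjacent to the $m'$ chosen weight-$r$ strings of $T_m$. Part (b) is precisely the lower shadow (in the Boolean lattice) of the colex initial segment of size $m'$ in $\binom{[2k]}{r}$. Given the cascade expansion $m' = \sum_{j=s}^{r}\binom{m_j}{j}$ with $1\le s \le m_s < m_{s+1}<\cdots< m_r$, the Kruskal--Katona theorem gives the size of this lower shadow as $\sum_{j=s}^{r}\binom{m_j}{j-1}$. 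Summing (a) and (b) yields the claimed formula. Uniqueness of the cascade representation is elementary and handled separately.

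The main obstacle is the compression lemma in stage one. The injection demonstrating $|N(C_i(S))|\le|N(S)|$ requires handling several configurations (whether the ``partner'' $x\oplus e_i$ of a boundary neighbor lies in $S$, $N(S)\setminus S$, or neither), and after compression one must still argue that the extremal configuration on the partial top level is the colex initial segment --- a claim which ultimately delegates to Kruskal--Katona. Both ingredients are classical (Harper 1966, and Ch.~16 of \cite{Bol}), but the bookkeeping is more delicate than the final closed-form expression suggests, so in the body of this paper I would simply quote Theorem~\ref{thm:harper} from \cite{Bol} and proceed.
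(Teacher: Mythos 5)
The paper does not prove this statement at all: it is Harper's vertex--isoperimetric theorem, quoted verbatim from \cite[Ch.~16, Theorem~4]{Bol} with pointers to \cite{H66} and \cite{FF81}. Since you conclude by saying you would simply cite \cite{Bol}, your bottom line matches the paper's. Your second stage is also sound: for the canonical set $T_m$ (all strings of weight $>r$ plus the first $m'$ weight-$r$ strings in colex order), the boundary does split into the $\binom{2k}{r}-m'$ unchosen weight-$r$ strings plus the lower shadow of the colex initial segment of size $m'$, whose cardinality $\sum_{j=s}^r\binom{m_j}{j-1}$ is the exact shadow formula from \cite[pp.~28--32]{Bol} --- the same fact the paper itself invokes inside Claim~\ref{cl:lower-shadow}.

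The gap is in your first stage. The operator $C_i$ you describe (pushing each element of $S$ to the endpoint with $i$-th coordinate $0$ whenever its partner is outside $S$) is the monotone compression used for Kruskal--Katona, and iterating it only produces a \emph{down-set}. Your claim that a set fixed by all the $C_i$ ``is necessarily supported on a union of complete weight levels together with a partial top level'' is false: the two-dimensional subcube $\{\emptyset,\{1\},\{2\},\{1,2\}\}$ is fixed by every coordinate down-compression yet has two incomplete weight levels (and, for $m=4$ in $Q_{2k}$, it has vertex boundary $8k-8$ while the Hamming-ball value is $8k-9$, so no rearrangement ``within one partial level'' can recover the extremal configuration from it). Passing from an arbitrary down-set to a Hamming ball is precisely where the content of Harper's theorem lives; the standard routes are either the codimension-one \emph{simplicial-order} compressions (replace $S\cap\{x_i=\epsilon\}$ by an initial segment of the simplicial order on $Q_{2k-1}$ of the same size, induct on dimension, and treat the compressed-but-not-initial sets separately), or the Frankl--F\"uredi/Bollob\'as argument that applies the Kruskal--Katona shadow bound level by level to a down-set. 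A smaller issue: the injection you invoke is between \emph{outgoing edges} of $C_i(S)$ and of $S$, which controls the edge boundary, not the vertex boundary $|N(\cdot)|$; these are different isoperimetric problems with different extremal sets (subcubes versus Hamming balls). Quoting \cite{Bol} is the right call; the sketch as written would not close into a proof.
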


\smallskip

\begin{remark}
To find the unique representation of $m$ stated in the above theorem,
we start by taking $r$ to be the largest integer $x\in \{1,\dots,2k\}$
such that $m \leq \sum_{i=x}^{2k} {{2k} \choose i}$, and then we fix
$m'=m-\sum_{i=r+1}^{2k} {{2k} \choose i}$; clearly, $m' \leq {2k \choose r}$.
Then we write $m'$ (uniquely) in the form $\sum_{j=s}^r {m_j \choose j}$;
 for this, we take $m_r$ to be the largest integer $y$ such that
 ${y \choose r} \leq m'$; if $m'= {m_r \choose r}$, then we are done,
otherwise, we iterate by replacing $m'$ and $r$ by $m'-{m_r \choose r}$ and $r-1$, respectively, and then
applying the previous step.
For example, if $k=3$ and $m=17$,
then $r=4$, and $m={6\choose 6}+{6\choose 5}+m'$,
where $m'=10$ and $m'={5 \choose 4}+{4\choose 3}+{2 \choose 2}$.
\end{remark}

In what follows, we use the abbreviated notation $\phi(m)$ for $b_v(m,Q_{2k})$.

Now, our goal is to show that for $m=1,\dots,2^{2k-1}$,
we have $\phi(m) > \min\{ k^2-1,\; (k-1)(m+1) \}$.
This will imply that the hypercube $Q_{2k}$
satisfies the inequalities stated in Theorem~\ref{thm:oriG}.

We first consider the case $m=1,\dots,2k+1$.
We claim that $\phi(m) ~=~ 1 + (m/2) (4k-m-1)$.
This can be easily verified for $m=1$ and $m=2k+1$
 (by applying Theorem~\ref{thm:harper}).
Now, suppose that $m=2,\dots,2k$;
then, observe that the unique representation of
$m$ (see Theorem~\ref{thm:harper}) is $1 + m'$,
where $m'=m-1$ and $r=2k-1$, and moreover,
$m' = {{2k-1} \choose {2k-1}} +  {{2k-2} \choose {2k-2}} +\dots+
	{{2k-m'} \choose {2k-m'}}$,
hence, $\phi(m) ~=~ (2k) - m' + \Big((2k-1)+(2k-2) +\dots+ (2k-m')\Big)
	~=~ 1 + (m/2) (4k-m-1)$.

\begin{fact} \label{f:small-m}
             \label{f:cube-2}
For each $m=1,\dots,2k+1$, we have
\[ \phi(m) ~>~ \min\{ (k-1)(m+1),\; (k-1)(k+1) \}.
\]
\end{fact}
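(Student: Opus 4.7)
The plan is to use the closed-form expression
$\phi(m) = 1 + (m/2)(4k-m-1)$ derived in the paragraph preceding
the statement, and then verify the inequality by direct algebra.
The natural case split is dictated by the minimum on the right-hand side:
$(k-1)(m+1) \le (k-1)(k+1)$ iff $m \le k$, so I would handle
$m \in \{1,\dots,k\}$ and $m \in \{k+1,\dots,2k+1\}$ separately.

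For $1 \le m \le k$, the active branch of the minimum is $(k-1)(m+1)$,
and I would show $\phi(m) > (k-1)(m+1)$. Expanding the difference and
collecting terms gives a concave quadratic in $m$ (the coefficient of
$m^2$ is $-\tfrac{1}{2}$), so its minimum over $[1,k]$ is attained at
an endpoint; checking $m=1$ and $m=k$ each yields a strictly positive
value for every $k \ge 1$.

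For $k+1 \le m \le 2k+1$, the active branch is $(k-1)(k+1) = k^2-1$
(which is also the bound most relevant to applying Theorem~\ref{thm:oriG}),
and I would show $\phi(m) > k^2-1$. Again the difference
$\phi(m) - (k^2-1)$ is a concave quadratic in $m$, so its minimum on
$[k+1, 2k+1]$ is attained at one of the two endpoints; checking
$m = k+1$ and $m = 2k+1$ each yields a strictly positive value
for every $k \ge 1$.

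I do not anticipate any substantive obstacle; the argument is routine
algebra, and the useful observation is that concavity of the two
differences reduces the verification from entire intervals to just
four specific values of $m$. The only mildly irksome point is that neither
$(k-1)(m+1)$ nor $(k-1)(k+1)$ alone dominates $\phi(m)$ across the full
range $m \in \{1,\dots,2k+1\}$, so the case split according to which
branch of the minimum is active is genuinely needed and cannot be
avoided by a uniform argument.
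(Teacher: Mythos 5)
Your proposal is correct and takes essentially the same route as the paper: both start from the closed form $\phi(m)=1+(m/2)(4k-m-1)$ and split at $m\le k$ versus $m\ge k$ according to which branch of the minimum is active. The only difference is in how positivity of the resulting quadratic is certified --- you use concavity to reduce to the four endpoint checks, while the paper rewrites each difference as a sum of manifestly nonnegative terms; both verifications go through.
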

\begin{proof}
{
We have $\phi(m) ~=~  1 + (m/2) (4k-m-1)$, for $m=1,\dots,2k+1$.
Our goal is to show that 
\[ \Delta ~=~ 1 + (m/2) (4k-m-1) -        \min\{ (k-1)(m+1),\; (k-1)(k+1) \}
\]
is positive.

First, suppose that $m \leq k$.
Then, we have
\[
2\Delta ~=~ 2 + m(4k-m-1) - 2(k-1)(m+1) ~=~ m(k-m) + (k+1)(m-2) + 6.
\]
It can be seen that this quantity is $\ge4$ for $1\leq m\leq k$.
\big(For $2\leq m\leq k$, note that $m(k-m)\ge0$ and $(k+1)(m-2)\ge0$,
hence, $2\Delta\ge 6$; moreover, for $m=1$, we have $2\Delta = 4$.\big)

Next, suppose that $k \leq m$.
Then, we have
\[
2\Delta ~=~ 2 + m(4k-m-1) - 2(k-1)(k+1) ~=~ (2k+1-m)(m-k+1) + (m-1)(k-1) + 2.
\]
Clearly, this quantity is $\ge2$ for $1\leq k\leq m\leq 2k+1$.
}
\end{proof}


\begin{fact} \label{f:big-m}
For each $m=2k+2,\dots,2^{2k-1}$, we have
\[ \phi(m) ~>~ (k-1)(k+1).
\]
\end{fact}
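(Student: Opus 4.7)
The plan is to invoke Harper's exact formula (Theorem~\ref{thm:harper}) and then case-split on the value of~$r$ in the cascade representation of~$m$. The formula expresses $\phi(m)$ as the number of weight-$r$ strings outside an extremal $m'$-set $S'$ plus the size of the lower shadow of $S'$ in level~$r-1$; concretely,
\[
\phi(m) ~=~ \Bigl(\textstyle\binom{2k}{r}-m'\Bigr)+|\partial_-S'|,
\]
since the minimiser is (up to a hypercube automorphism) the ``Hamming ball'' consisting of all strings of weight $\ge r+1$ together with $S'$. The hypothesis $2k+2\le m\le 2^{2k-1}$ forces $k\le r\le 2k-2$: the upper bound comes from $\sum_{i\ge 2k-1}\binom{2k}{i}=2k+1<m$, and the lower bound from $m\le 2^{2k-1}<\sum_{i\ge k}\binom{2k}{i}=2^{2k-1}+\binom{2k}{k}/2$.

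For the case $k+1\le r\le 2k-2$ (which is empty when $k=2$), I would isolate the key Claim~\ref{cl:lower-shadow}: for every non-empty family $S'$ of weight-$r$ strings with $r\ge k+1$, one has $|\partial_-S'|>|S'|$. This is a one-line double-count of the edges of $Q_{2k}$ between levels~$r$ and~$r-1$: they number $|S'|\cdot r$ when counted from above and at most $|\partial_-S'|\cdot(2k-r+1)$ from below, and $r/(2k-r+1)\ge (k+1)/k>1$. Substituting into the decomposition gives $\phi(m)>\binom{2k}{r}\ge\binom{2k}{2}=k(2k-1)$, and $k(2k-1)-(k^2-1)=k^2-k+1>0$, so this case is done.

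For the boundary case $r=k$, the symmetry $\sum_{i<k}\binom{2k}{i}=\sum_{i>k}\binom{2k}{i}$ together with $m\le 2^{2k-1}$ yields $m'\le\binom{2k}{k}/2$, so $\binom{2k}{k}-m'\ge\binom{2k}{k}/2$. Since $m'\ge 1$, the family $S'$ contains a weight-$k$ string, whose $k$ lower neighbours already force $|\partial_-S'|\ge k$, giving $\phi(m)\ge\binom{2k}{k}/2+k$. It only remains to verify the elementary inequality $\binom{2k}{k}\ge 2(k^2-k-1)$ for all $k\ge 2$, which I would check directly for $k\in\{2,3\}$ and then extend to $k\ge 3$ via the ratio bound $\binom{2k}{k}/\binom{2k-2}{k-1}=2(2k-1)/k\ge 3$, yielding $\binom{2k}{k}\ge 2\cdot 3^{k-1}$.

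The main obstacle is the boundary case $r=k$: in the large-$r$ regime the shadow strictly dominates~$m'$ and the bound is almost automatic, but at $r=k$ the shadow can be roughly equal to~$m'$, so one must exploit the \emph{global} upper bound $m\le 2^{2k-1}$ (via $m'\le\binom{2k}{k}/2$) in order to extract enough from the $\binom{2k}{k}-m'$ term. Apart from this, the argument reduces to Harper's formula plus a one-line double-count in the bipartite subgraph between two consecutive levels of~$Q_{2k}$.
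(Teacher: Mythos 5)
Your proposal is correct and follows essentially the same route as the paper: Harper's formula, the deduction $k\le r\le 2k-2$, and a case split at $r=k$ versus $r\ge k+1$, with the latter handled by exactly the paper's degree double-count between levels $r$ and $r-1$ (Claim~\ref{cl:lower-shadow}). The only (minor, valid) deviation is in the boundary case $r=k$, where you keep the shadow term to get the extra $+k$ and thereby treat $k=2$ uniformly, whereas the paper drops that term and instead disposes of $k=2$ by citing the $4$-node-connectivity of $Q_4$.
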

\begin{proof}
{
Let $\alpha$ denote
$\sum_{i=0}^{k-1} {{2k} \choose i} = \sum_{i=k+1}^{2k} {{2k} \choose i}$;
observe that
$2^{2k} = \sum_{i=0}^{2k} {{2k} \choose i} = 2\alpha + {{2k} \choose k}$,
hence, $\alpha = \frac12 2^{2k} - \frac12 {{2k} \choose k}$.

Suppose that $m=2^{2k-1}$.
Then $m = \frac12 2^{2k} = \alpha + \frac12 {{2k} \choose k}$, hence,
$\sum_{i=k+1}^{2k} {{2k} \choose i} < m \leq
\sum_{i=k}^{2k} {{2k} \choose i}$.
Hence, for each $m=2k+2,\dots,2^{2k-1}$,
we have $k\leq r \leq 2k-2$
in the unique representation of $m$ given by Theorem~\ref{thm:harper},
i.e., we have
$\ds m = \sum_{i=r+1}^{2k} {2k \choose i} + m', ~\tn{where}~
0 < m' \leq {2k \choose r}, ~\tn{and}~ k\leq r \leq 2k-2$;
moreover, we have
$m' = \sum_{j=s}^r {m_j \choose j}, \quad 1\le s\le m_s<m_{s+1}<\dots<m_r$.
We will use this notation in the rest of the proof.

To complete the proof, we examine two cases, namely,
(1)~$r=k$, and
(2)~$k+1 \le r \le 2k-2$.

\begin{description}
{
\item[Case 1:  $r=k$.]
Since $m = \alpha + m' \le 2^{2k-1}$, we have
$1 \le m' \le 2^{2k-1} - \alpha = \frac12 {2k \choose k}$.
Hence, $\ds \phi(m) =
	{2k \choose r} - m' + \sum_{j=s}^r {m_j \choose {j-1}} \geq
	{2k \choose r} - m' \geq
	{2k \choose r} - \frac12 {2k \choose k} = \frac12 {2k \choose k}$.
Clearly, for $k=3$, we have $\frac12 {2k \choose k} > k^2-1$,
and for $k\ge3$, we have $\frac12 {2k \choose k} \ge \frac12 {2k \choose 3} > k^2-1$.
Moreover, for $k=1$, Fact~\ref{f:big-m} holds vacuously, and
for $k=2$, by the 4-node~connectivity of $Q_4$, we have
$\phi(m) \ge 4 > k^2-1=3,\; \forall m\in\{4,\dots,8\}$.

\item[Case  2:   $k+1 \le r \le 2k-2$.]
Claim~\ref{cl:lower-shadow}, see below, states the key inequality 
\[	m' < \sum_{j=s}^r {m_j \choose {j-1}}.
\]
This immediately implies that
\[ \phi(m) = {2k \choose r} - m' + \sum_{j=s}^r {m_j \choose {j-1}} >
	{2k \choose r} \ge {2k \choose 2} =
	k(2k-1) > k^2-1 ~~(\tn{for~} k\ge1),
\]
as required;
observe that the second inequality uses the upper~bound on $r$
(as well as the lower~bound $r\ge k+1\ge 2$).
}
\end{description}


\medskip

\begin{claim} \label{cl:lower-shadow}
For $r\ge k+1$,
we have
$\ds \sum_{j=s}^r {m_j \choose {j-1}} > m'$.
\end{claim}

To prove this claim, it is convenient to view the $2^{2k}$ nodes
of $Q_{2k}$ as the $2^{2k}$ subsets of the set $\{1,2,\dots,2k\}$
(recall the second construction in Section~\ref{sec:prelims}).

Let $L_i \subset V(Q_{2k})$ denote the set of nodes corresponding to
$i$-element subsets of $\{1,2,\dots,2k\}$.
For $A \subseteq L_i$, let $\Gamma(A)$ denote $N_{Q_{2k}}(A)\cap{L_{i-1}}
~=~ \{ v \in L_{i-1} : \exists w \in A \mbox { such that } \{v,w\}
\in E(Q_{2k}) \}$; $\Gamma(A)$ is called the lower shadow of $A$.
(We mention that the lower shadow of $A$ is denoted by $\partial A$
in \cite{Bol}.)


Following \cite[Ch.5]{Bol}, let $\partial^{(r)}(m')$ denote
$\sum_{j=s}^r {m_j \choose {j-1}}$.

Let $M'\subseteq L_r$ consist of the first $m'$ nodes
(in colex order) of $L_r$, and let
$S' \subseteq L_{r-1}$ consist of the first $\partial^{(r)}(m')$ nodes
(in colex order) of $L_{r-1}$.

It is well known that the lower~shadow of the first $m'$ nodes (in
colex order) of $L_r$ consists of precisely the first
$\partial^{(r)}(m')$ nodes (in colex order) of $L_{r-1}$;
see \cite[pp.~28--32]{Bol}.
Thus, we have $\Gamma(M') = S'$.

Our key inequality can be restated as $\ds m' = |M'| < |S'|$.
We will derive it by examining the subgraph $H$ of $Q_{2k}$ induced by
$M' \cup S'$.
Note that $H$ is a bipartite graph with the node bipartition $M'$, $S'$.
Observe that for each node of $M'$
(which corresponds to an $r$-element set),
there are exactly $r$ neighbors in $\Gamma(M') = S'$.
On the other hand, a node in $S'$
(which corresponds to an $(r-1)$-element set)
has $\leq 2k - r + 1 <  r$ neighbors in $M'$
(the strict inequality follows from $k+1 \leq r$). 
It follows that  $|M'| < |S'|$. 
This proves the inequality 
$\ds \sum_{j=s}^r {m_j \choose {j-1}} > m'$
of our claim. 
}
\end{proof}

\medskip

Our main result follows from Theorem~\ref{thm:oriG}, Theorem~\ref{thm:harper},
the fact that $Q_{2k}$ is $2k$-regular and $2k$-connected,
 and the inequalities stated above
(see Facts~\ref{f:cube-2},~\ref{f:big-m}).

\begin{theorem} \label{thm:main}
{Every} Eulerian orientation
of a hypercube of degree $2k$
is strongly $k$-node connected.
\end{theorem}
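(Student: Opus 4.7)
The proof plan is to invoke Theorem~\ref{thm:oriG} with $G = Q_{2k}$, and to verify its hypotheses directly from the facts already proved in this subsection. Since $Q_{2k}$ is visibly $2k$-regular (every node has exactly one neighbor per coordinate flip) and is well known to be $2k$-node connected (this can, for example, be read off from construction~(iii) by an easy induction), the only remaining task is to verify the neighborhood inequality
\[
|N_{Q_{2k}}(S)| ~>~ \min\{ k^2-1,\; (k-1)(|S|+1) \}
\]
for every nonempty $S \subseteq V(Q_{2k})$ with $|S| \le |V(Q_{2k})|/2 = 2^{2k-1}$.

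Writing $m = |S|$ and using $\phi(m) = b_v(m, Q_{2k}) \le |N_{Q_{2k}}(S)|$, I would split the range $1 \le m \le 2^{2k-1}$ into the two regimes handled by the preceding facts. For $1 \le m \le 2k+1$, Fact~\ref{f:cube-2} yields $\phi(m) > \min\{(k-1)(m+1),\; (k-1)(k+1)\}$; noting that $(k-1)(k+1) = k^2 - 1$, this is exactly the required bound in that regime. For $2k+2 \le m \le 2^{2k-1}$, Fact~\ref{f:big-m} yields $\phi(m) > (k-1)(k+1) = k^2-1 \ge \min\{k^2-1,\; (k-1)(m+1)\}$, which again is what is needed. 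Together these two regimes cover the entire range $1 \le m \le |V(Q_{2k})|/2$.

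With the inequality established for every admissible $S$, the hypothesis of Theorem~\ref{thm:oriG} holds for $Q_{2k}$, so every Eulerian orientation of $Q_{2k}$ is strongly $k$-node connected, which is precisely Theorem~\ref{thm:main}. There is essentially no obstacle left at this stage: all the heavy lifting — Harper's formula for $\phi(m)$, the small-$m$ polynomial verification, and the lower-shadow counting argument in the large-$m$ case — has already been absorbed into Facts~\ref{f:cube-2} and~\ref{f:big-m}. The only care needed is the bookkeeping check that the two ranges match up cleanly at the boundary $m = 2k+1/2k+2$ and that $(k-1)(k+1)$ is read as $k^2-1$ so that the two facts together imply precisely the $\min\{k^2-1,\;(k-1)(|S|+1)\}$ bound demanded by Theorem~\ref{thm:oriG}.
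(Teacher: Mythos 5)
Your proposal is correct and follows essentially the same route as the paper: the paper also derives Theorem~\ref{thm:main} by applying Theorem~\ref{thm:oriG} to $Q_{2k}$, citing its $2k$-regularity and $2k$-connectivity, and covering the neighborhood inequality via Fact~\ref{f:cube-2} for $1\le m\le 2k+1$ and Fact~\ref{f:big-m} for $2k+2\le m\le 2^{2k-1}$, with the same observation that $(k-1)(k+1)=k^2-1$. Nothing is missing.
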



\bigskip
\bigskip
\noindent
{\bf Acknowledgments}:
We thank Zoltan Szigeti for several suggestions that
improved the paper, and we thank
Andre Linhares for his comments on a preliminary draft.
We are grateful to other colleagues and reviewers for their comments.





\end{document}